	\newcommand{\rf}[1]{\comment{Reference: \url{#1}}}
    \newtcbox{\feedback}{nobeforeafter,colframe=black,colback=white,boxrule=0.5pt,arc=2pt,
      boxsep=0pt,left=2pt,right=2pt,top=2pt,bottom=2pt,tcbox raise base}
    \newtheorem{thm}{Theorem}
    \newtheorem{prop}{Proposition}
    \newtheorem{lem}{Lemma}
    \newtheorem{cor}{Corollary}
\newcommand\blfootnote[1]{%
  \begingroup
  \renewcommand\thefootnote{}\footnote{#1}%
  \addtocounter{footnote}{-1}%
  \endgroup
}
	\renewcommand{\P}{\mathop{}\!\textnormal{P}}
	\newcommand{\E}{\mathop{}\!\textnormal{E}}
	\newcommand{\Var}{\mathop{}\!\textnormal{Var}}
	\newcommand{\N}{\mathcal{N}}
	\newcommand{\0}{\mathbf{0}}
	\newcommand{\1}{\mathbf{1}}
	\newcommand{\I}{\mathbb{I}}
	\renewcommand{\O}{\mathbb{O}}
	\renewcommand{\vec}{\textnormal{vec}}
	\newcommand{\R}{\mathbb{R}}
    \newcommand{\indep}{\rotatebox[origin=c]{90}{$\models$}}
	\newcommand{\OLS}{\textnormal{OLS}}  
    \newcommand{\SSR}{\textnormal{SSR}}  
\title{
Unbiased Shrinkage Estimation}
\author{Jann Spiess}
\date{\textsc{Working Paper} \\ This version: October 31, 2017}
\begin{document}
    
    \maketitle
    
    \begin{abstract}
        Shrinkage estimation usually reduces variance at the cost of bias. But when we care only about some parameters of a model, I show that we can reduce variance without incurring bias if we have additional information about the distribution of covariates. In a linear regression model with homoscedastic Normal noise, I consider shrinkage estimation of the nuisance parameters associated with control variables. For at least three control variables and exogenous treatment, I establish that the standard least-squares estimator is dominated with respect to squared-error loss in the treatment effect even among unbiased estimators and even when the target parameter is low-dimensional. I construct the dominating estimator by a variant of James–Stein shrinkage in a high-dimensional Normal-means problem. It can be interpreted as an invariant generalized Bayes estimator with an uninformative (improper) Jeffreys prior in the target parameter.
    \end{abstract}
    
    \blfootnote{\hspace{-\baselineskip}
        Jann Spiess, Department of Economics, Harvard University, \href{mailto:jspiess@fas.harvard.edu}{\texttt{jspiess@fas.harvard.edu}}.
        I thank Gary Chamberlain, Maximilian Kasy, Carl Morris, and Jim Stock for insightful conversations, and seminar participants at Harvard for helpful comments.
    }%

    \section*{Introduction}
    
    Many inference tasks have the following feature: the researcher wants to obtain a high-quality estimate of a set of target parameters (for example, a set of treatment effects in an RCT), but also estimates a number of nuisance parameters she does not care about separately (for example, coefficients on control variables).
    In these cases, can we reduce variance in the estimation of a target parameter without inducing bias by shrinking in the estimation of possibly high-dimensional nuisance parameters?
    In a linear regression model with homoscedastic, Normal noise, I show that a natural application of James--Stein shrinkage to the parameters associated with at least three control variables reduces loss in the possibly low-dimensional treatment effect parameter without producing bias provided that treatment is random.
    
    The proposed estimator effectively averages between regression models with and without control variables,
    similar to the \cite{Hansen:2016ix} model-averaging estimator
    and coinciding up to a degrees-of-freedom correction with the corresponding Mallows estimator from \cite{Hansen:2007cqa}.
    For the specific choice of shrinkage, I contribute three finite-sample properties:
    First, I note that by averaging over the distribution of controls we obtain dominance of the shrinkage estimator even for low-dimensional target parameters, unlike other available results that require a loss function that is at least three-dimensional.
    Second, I establish that the resulting estimator remains unbiased under exogeneity of treatment.
    Third,
    I conceptualize it as a two-step estimator with a first-stage prediction component.
    Fourth, I show that it can be seen as a natural, invariant generalized Bayes estimator with respect to a partially improper prior corresponding to uninformativeness in the target parameter.
    
    The linear regression model is set up in Section~\ref{sect:setup}.
    Section~\ref{sect:control} proposes the estimator and establishes loss improvement relative to a benchmark OLS estimator provided treatment is exogenous.
    Section~\ref{sect:invariance} motivates the estimator as an invariant generalized Bayes estimator (with respect to an improper prior) in a suitably transformed many-means problem.

    \section{Linear Regression Setup}
    \label{sect:setup}
    
    I consider estimation of the structural parameter $\beta \in \R^k$ in
    the canonical linear regression model
    \begin{align}
    \label{eqn:normal}
        Y_i = \alpha + X_i' \beta + W_i' \gamma + U_i
    \end{align}
    from $n$ iid observations $(Y_i,X_i,W_i)$, where $X_i \in \R^m$ are the regressors of interest, $W_i \in \R^k$ control variables, and $U_i \in \R$ is homoscedastic, Normal noise.
    $\alpha$ is an intercept,%
    \footnote{We could alternatively include a constant regressor in $X_i$ and subsume $\alpha$ in $\beta$.
    I choose to treat $\alpha$ separately since I will focus on the loss in estimating $\beta$, ignoring the performance in recovering the intercept $\alpha$.} 
    and $\gamma$ is a nuisance parameter.
    To obtain identification of $\beta$ in \Cref{eqn:normal},
    I assume that $U_i$ is orthogonal to $X_i$ and $W_i$ (no omitted variables).
    
    Throughout this document, I write upper-case letters for random variables (such as $Y_i$) and lower-case letters for fixed values (such as when I condition on $X_i = x_i$).
    When I suppress indices, I refer to the associated vector or matrix of observations, e.g. $Y \in \R^n$ is the vector of outcome variables $Y_i$ and $X \in \R^{n \times m}$ is the matrix with rows $X'_i$.

    \section{Two-Step Partial Shrinkage Estimator}
    \label{sect:control}
    
    By assumption there are control variables $W$ available with
    \begin{align*}
        Y | X{=}x, W{=}w &\sim \N(\1 \alpha + x \beta + w \gamma, \sigma^2 \I_n)
    \end{align*}
    where $\sigma^2$ need not be known.
    We care about the (possibly high-dimensional) nuisance parameter $\gamma$ only in so far as it helps us to estimate the (typically low-dimensional) target parameter $\beta$, which is our object of interest.
        
    \subsection{A canonical form that preserves structure}
    
    Given $x \in \R^{n \times m}$ and $w \in \R^{n \times k}$,
    where we assume that $(\1,x,w)$ has full rank $1 + m + k \leq n$,
    let $q = (q_{\1},q_x,q_w,q_r) \in R^{n \times n}$ orthonormal
    where $q_{\1} \in \R^n, q_x \in \R^{n \times m}, q_w \in \R^{n \times k}$
    such that $\1$ is in the linear subspace of $\R^n$ spanned by $q_{\1} \in \R^{n}$
    (that is, $q_{\1} \in \{\1/\sqrt{n},-\1/\sqrt{n}\}$),
    the columns of $(\1,x)$ are in the space spanned by the columns of $(q_{\1},q_x)$,
    and the columns of $(\1,x,w)$ are in the space spanned by the columns of $(q_{\1},q_x,q_w)$.
    (Such a basis exists, for example, by an iterated singular value decomposition.)
    Then,
    \begin{align*}
        Y^* = q' Y | X{=}x, W{=}w &\sim \N\left(
        \begin{pmatrix}
            q'_{\1} \1 \alpha + q'_{\1} x \beta + q'_{\1} w \gamma \\
            q'_{x} x \beta + q'_{x} w \gamma \\
            q'_{w} w \gamma \\
            \0_{n - 1 - m - k}
        \end{pmatrix},
        \sigma^2 \I_n
        \right).
    \end{align*}
    Writing $Y^*_x$, $Y^*_w, Y^*_r$ for the appropriate subvectors of $Y^*$,
    we find, in particular, that
    \begin{align*}
        \begin{pmatrix}
            Y^*_x \\
            Y^*_w \\
            Y^*_r
        \end{pmatrix}
        | X{=}x, W{=}w 
        \sim
        \N\left(
            \begin{pmatrix}
                \mu_x + a \mu_w \\
                \mu_w \\
                \0_{n-1-m-k}
            \end{pmatrix},
            \sigma^2 \I_{n - 1}
        \right)
    \end{align*}
    where $\mu_x = q'_{x} x \beta \in \R^m$, $\mu_w = q'_{w} w \gamma \in \R^k$, and $a = q'_x w (q'_w w)^{-1} \in \R^{m \times k}$.%
    \footnote{Alternatively, we could have denoted by $\mu_x$ the mean of $Y_x^*$.
    However, by separating out $\mu_x$ from $a \mu_w$ I feel that the role of $\mu_w$ as a relevant nuisance parameter becomes more transparent.}
    In transforming linear regression to this Normal-means problem, as well as in partitioning the coefficient vector into two groups, for only one of which I will propose shrinkage, I follow \cite{Sclove:1968ja}.
    
    \subsection{Two-step estimator}
    \label{subsect:twostep}
    
    Conditional on $X{=}x, W{=}w$ and given an estimator $\hat{\mu}_w = \hat{\mu}_w(Y^*_w,Y^*_r)$ of $\mu_w$,
    a natural estimator of $\mu_x$ is
    $
        \hat{\mu}_x
        = \hat{\mu}_x(Y^*_x,Y^*_w,Y^*_r)
        = Y^*_x - a \hat{\mu}_w
    $.
    An estimator of $\beta$ is obtained by setting $\hat{\beta} = (q'_x x)^{-1} \hat{\mu}_x$.
    (The linear least-squares estimator for $\beta$ is obtained from $\hat{\mu}_w = Y^*_w$.)
    A natural loss function for $\hat{\beta}$ that represents prediction loss units is the weighted loss $(\hat{\beta} - \beta)' (x' q_x q'_x x) (\hat{\beta} - \beta) = \| \hat{\mu}_x - \mu_x \|^2$.
    We can therefore focus on the (conditional) expected squared-error loss in estimating $\mu_x$, for which we find
    \begin{align*}
        \E[\| \hat{\mu}_x - \mu_x \|^2 |X{=}x,W{=}w]
        =
        m \sigma^2
        + \E[\| \hat{\mu}_w - \mu_w \|_{a'a}^2 |X{=}x,W{=}w]
    \end{align*}
    with the seminorm $\| v \|_{a'a} = \sqrt{v' a'a v}$ on $\R^k$.
    
    For high-dimensional $\mu_w$ ($k \geq 3$), a natural estimator $\hat{\mu}_w$ with low expected squared-error loss is a shrinkage estimator of the form $\hat{\mu}_w = C Y^*_w$ with scalar $C$, such as the \cite{James:1992jm} estimator for which $C = 1 - \frac{(k - 2) \| Y^*_r \|^2}{(n - m - k + 1) \| Y^*_w \|^2}$ (or its positive part).
    While improving with respect to expected squared-error loss ($a'a = \text{const.} \cdot \I_k$), this specific estimator may yield higher (conditional) expected loss in $\mu_x$ when the implied loss function for $\mu_w$ deviates from squared-error loss ($a'a \neq \text{const.} \cdot \I_k$, so the loss function is not invariant under rotations).
    We will show below that it is still appropriate in the case of independence of treatment and control.
    
    \subsection{From conditional to unconditional loss}
    \label{sect:uncond}

    For conditional inference it is known that the least-squares estimator is admissible for estimating $\beta$ provided $m \leq 2$ and inadmissible provided $m \geq 3$ no matter what the dimensionality $k$ of the nuisance parameter $\gamma$ is \citep{James:1992jm}, as the rank of the loss function is decisive.
    The above construction does not provide a counter-example to this result:
    the rank of $a'a =(w' q_w)^{-1} w' q_x q'_x w (q'_w w)^{-1}$ is at most $m$, so for $m \leq 2$, $\hat{\mu}_w = Y^*_w$ remains admissible for the loss function on the right.
    While we could achieve improvements for $m \geq 3$ -- through shrinkage in $\hat{\mu}_w$ and/or directly in $\hat{\mu}_x$ -- our interest is in the case where $m$ is low and $k$ is high.
    Conditional on $X{=}x, W{=}w$ we can thus not hope to achieve improvements that hold for any $(\beta,\gamma)$,
    but we can still hope that shrinkage estimation of $\mu_w$ yields better estimates of $\beta$ on average over draws of the data.
    
    To this end, assume that
    \begin{align*}
        \vec(W)|X{=}x \sim \N(\vec(\1 \alpha_W + x \beta_W),\Sigma_W \otimes \I_n)
    \end{align*}
    (that is, $W_i | X{=}x \stackrel{\text{iid}}{\sim} \N(1 \alpha_W + x_i \beta_W,\Sigma_W)$).
    Here, $\Sigma_W \in \R^{k \times k}$ is symmetric positive-definite (but not necessarily known).
    $\alpha_W \in \R^{1 \times k}, \beta_W  \in \R^{m \times k}$ describe the conditional expectation of control variables given the regressors $X{=}x$.
    The case where $x$ and $W$ are orthogonal ($\beta_W = \O_{m \times k}$) and controls $W$ thus not required for identification will play a special role below.
    
    Given $X{=}x$, assume $(q_{\1},q_x)$ is deterministic, and fix $q_{\perp}$ such that $\tilde{q} = (q_{\1},q_x,q_{\perp}) \in \R^{n \times n}$ is orthonormal.
    Note that
    \begin{align*}
        \vec((q_x,q_{\perp})' W) | X{=}x
        \sim \N\left(\vec\left(
            \begin{pmatrix}
                q'_x x \beta_W \\
                \O_{n - 1 - m \times k}
            \end{pmatrix}
        \right),\Sigma_W \otimes \I_{n-1}\right).
    \end{align*}
    In particular, $q_x' W \indep q_\perp' W$.
    It follows with
    \begin{align*}
        (q_x,q_{\perp})' Y | X{=}x, W{=}w &\sim \N\left(
        \begin{pmatrix}
            q'_{x} x \beta + q'_{x} w \gamma \\
            q'_{\perp} w \gamma
        \end{pmatrix},
        \sigma^2 \I_{n - 1}
        \right)
    \end{align*}
    that indeed $q_x' (Y,W) \indep q_\perp' (Y,W)$.
    
    Conditional on $W{=}w$ in the above derivation,
    $
        a \hat{\mu}_w - a \mu_w
        = q'_x w \hat{\gamma} - q'_x w \gamma
    $
    for $\hat{\gamma} = (q'_w w)^{-1} \hat{\mu}_w$ a function of $q'_\perp w$ and $(Y^*_w,Y^*_r) = (q'_w q_\perp,q'_r q_\perp) (q'_\perp Y)$,
    so $\hat{\gamma} = \hat{\gamma}(q'_\perp Y,q'_\perp w)$.
    Assuming measurability, $\hat{\gamma}(q'_\perp Y,q'_\perp W) \indep (q'_x Y, q'_x W)$.
    Now writing $\hat{\gamma} = \hat{\gamma}(q'_\perp y,q'_\perp w)$ this implies that
    \begin{align*}
        \E[\| \hat{\mu}_w - \mu_w \|_{a'a}^2 |X{=}x,q'_\perp(Y,W) = q'_\perp (y,w)]
        &= \| \hat{\gamma} - \gamma \|^2_{\E[ W' q_x q'_x W|X{=}x]}
    \end{align*}
    with
    $
        \E[ W' q_x q'_x W|X{=}x]
        = \beta'_W x' q_x q'_x x \beta_W + m \Sigma_W
    $
    of full rank $k$.
    For the expectation of the implied $\hat{\beta}$,
    we find
    \begin{align*}
        \E[\hat{\beta} |X{=}x,q'_\perp(Y,W) = q'_\perp (y,w)]
        = \beta - \beta_W (\hat{\gamma} - \gamma).
    \end{align*}
    We obtain the following characterization of conditional bias and squared-error loss of the implied estimator $\hat{\beta}$:
    \begin{lem}[Properties of the two-step estimator]
        \label{lem:twostepprediction}
        Let $(\tilde{Y},\tilde{W})$ be jointly distributed according as
        \begin{align*}
            \vec(\tilde{W}) &\sim \N(\0_{k (n - 1 - m)},\Sigma_W \otimes \I_{n-1-m}),
            \\
            \tilde{Y} | \tilde{W}=\tilde{w} &\sim
            \N(\tilde{w} \gamma, \sigma^2 \I_{n - 1 - m}),
        \end{align*}
        and write $\tilde{\E}$ for the corresponding expectation operator.
        For any measurable estimator
        $
            \hat{\gamma}: \R^{n - m - 1} \times \R^{n - m - 1 \times k} \rightarrow \R^k
        $
        with $\tilde{\E}[ \|\hat{\gamma}(\tilde{Y},\tilde{W})\|^2] < \infty$,
        the estimator
        $
            \hat{\beta}(y,w) = (q'_x x)^{-1} q'_x y - (q'_x x)^{-1} q'_x w \hat{\gamma}(q'_\perp y,q'_\perp w)
        $
        defined for convenience for fixed $x$,
        has conditional bias
        \begin{align*}
            \E[\hat{\beta}(Y,W) |X{=}x] - \beta
            = - \beta_W (\tilde{\E}[\hat{\gamma}(\tilde{Y},\tilde{W})] - \gamma)
        \end{align*}
        and expected (prediction-norm) loss
        \begin{align*}
            \E[\|\hat{\beta}(Y,W) - \beta \|^2_{x' q_x q'_x x} |X{=}x]
            = m \sigma^2 + \tilde{\E}[\|\hat{\gamma}(\tilde{Y},\tilde{W}) - \gamma\|^2_\phi]
        \end{align*}        
        for $\phi = \beta'_W x'q_x q_x' x \beta_W + m \Sigma_W$.
        
    \end{lem}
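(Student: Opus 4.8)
The plan is to reduce the whole statement to two structural facts already assembled in the text: the independence $q_x'(Y,W)\indep q_\perp'(Y,W)$ given $X{=}x$, and a distributional identification of $(q_\perp'Y,q_\perp'W)$ with the auxiliary pair $(\tilde{Y},\tilde{W})$. First I would nail down the latter. Since the columns of $x$ lie in the span of $(q_{\1},q_x)$ we have $q_\perp'x=\0$, so $q_\perp'Y=q_\perp'W\gamma+q_\perp'U$ and $q_\perp'W$ is centered; the conditional laws displayed just above the lemma then give $\vec(q_\perp'W)\,|\,X{=}x\sim\N(\0,\Sigma_W\otimes\I_{n-1-m})$ and $q_\perp'Y\,|\,X{=}x,q_\perp'W{=}q_\perp'w\sim\N(q_\perp'w\,\gamma,\sigma^2\I_{n-1-m})$. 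This is exactly the law of $(\tilde{Y},\tilde{W})$, so that for any integrable functional $g$ one has $\E[g(\hat\gamma(q_\perp'Y,q_\perp'W))\,|\,X{=}x]=\tilde{\E}[g(\hat\gamma(\tilde{Y},\tilde{W}))]$; the hypothesis $\tilde{\E}[\|\hat\gamma\|^2]<\infty$ is what guarantees that every expectation below exists and that this substitution is legitimate.

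Next I would record the central algebraic simplification. By construction $q_x'x\,\hat\beta=q_x'Y-q_x'W\hat\gamma=\hat\mu_x$ and $\mu_x=q_x'x\beta$, so $\hat\beta-\beta=(q_x'x)^{-1}(\hat\mu_x-\mu_x)$ with $\hat\mu_x-\mu_x=q_x'Y-q_x'W\hat\gamma-q_x'x\beta$. Substituting $q_x'Y=q_x'x\beta+q_x'W\gamma+\eta$, with $\eta:=q_x'U\sim\N(\0,\sigma^2\I_m)$, this collapses to
\[
    \hat\mu_x-\mu_x=\eta-q_x'W(\hat\gamma-\gamma).
\]
The three objects $\eta$, $q_x'W$ and $\hat\gamma$ are mutually independent given $X{=}x$: $\hat\gamma$ is a function of $q_\perp'(Y,W)$ and hence independent of the block $q_x'(Y,W)$ by the cited result, while inside that block $\eta=q_x'U$ is independent of $q_x'W$ with $\E[\eta]=\0$. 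I would also record $\E[q_x'W\,|\,X{=}x]=q_x'x\beta_W$ and compute $\E[W'q_xq_x'W\,|\,X{=}x]=\beta_W'x'q_xq_x'x\beta_W+m\Sigma_W=\phi$, splitting $q_x'W$ into its mean $q_x'x\beta_W$ and a fluctuation $V$ with $\vec(V)\sim\N(\0,\Sigma_W\otimes\I_m)$, for which $\E[V'V]=m\Sigma_W$.

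The two claims then follow by iterated expectations. Conditioning on $q_\perp'(Y,W)$ freezes $\hat\gamma$ and leaves the independent block $q_x'(Y,W)$ to average over, giving $\E[\hat\mu_x-\mu_x\,|\,X{=}x,q_\perp'(Y,W)]=-q_x'x\beta_W(\hat\gamma-\gamma)$, i.e.\ $\E[\hat\beta\,|\,X{=}x,q_\perp'(Y,W)]=\beta-\beta_W(\hat\gamma-\gamma)$; and, since the cross term $\E[(\hat\gamma-\gamma)'W'q_x\eta]$ vanishes because $\eta$ is mean-zero and independent of both $q_x'W$ and $\hat\gamma$, the quadratic form factors through $\E[W'q_xq_x'W\,|\,X{=}x]=\phi$ to give $\E[\|\hat\mu_x-\mu_x\|^2\,|\,X{=}x,q_\perp'(Y,W)]=m\sigma^2+\|\hat\gamma-\gamma\|^2_\phi$. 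Averaging once more over $q_\perp'(Y,W)$ and replacing the resulting $\E[\,\cdot\,|\,X{=}x]$ of functionals of $\hat\gamma$ by $\tilde{\E}$ via the identification of the first paragraph yields both displayed identities (recalling $\|\hat\beta-\beta\|^2_{x'q_xq_x'x}=\|\hat\mu_x-\mu_x\|^2$).

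I expect the only genuinely delicate step to be the distributional identification of the first paragraph --- in particular checking that $q_\perp'W$ is exactly centered (which rests on $q_\perp'x=\0$) and carries the covariance $\Sigma_W\otimes\I_{n-1-m}$, so that functionals of $\hat\gamma$ under $X{=}x$ can be read off from the clean auxiliary experiment $(\tilde{Y},\tilde{W})$. Everything after that is bookkeeping driven by the single independence relation $q_x'(Y,W)\indep q_\perp'(Y,W)$ together with $q_x'U\indep W$, and the integrability hypothesis is exactly what licenses each interchange of expectation.
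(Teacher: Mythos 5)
Your proof is correct and takes essentially the same route as the paper's own argument (the derivation in Section~\ref{sect:uncond} immediately preceding the lemma): establish $q_x'(Y,W) \indep q_\perp'(Y,W)$ given $X{=}x$, identify the law of $(q_\perp'Y,q_\perp'W)$ with that of $(\tilde{Y},\tilde{W})$, and then compute bias and loss by iterated expectations conditional on $q_\perp'(Y,W)$, using $\E[W'q_xq_x'W|X{=}x]=\beta_W'x'q_xq_x'x\beta_W+m\Sigma_W=\phi$. The one detail you make more explicit than the paper---the centering of $q_\perp'W$, which rests on $q_\perp'\1=\0$ as well as $q_\perp'x=\0$---is immediate and correctly handled.
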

    Note that this lemma does not rely on $n \geq 1 + m + k$, and indeed generalizes to the case $n > 1 + m$ for any $k \geq 1$, including $k > n$.
    
    \subsection{Exogenous treatment}
    
    We consider the special case where treatment is exogenous, and thus $\beta_W = \O_{m \times k}$.
    This assumption could be justified, for example, in a randomized trial.
    Note that in this case in addition to the linear least-squares estimator in the ``long'' regression that includes controls $W$ another natural unbiased (conditional on $X{=}x$) estimator is available, namely the coefficient $(q'_x x)^{-1} q'_x Y$ in the ``short'' regression without controls.
    The ``long'' and ``short'' regression represent special (edge) cases in the class of two-step estimators introduced above, which are all unbiased in that sense under the exogeneity assumption:
    
    \begin{cor}[A class of unbiased two-step estimators]
        If $\beta_W = \O_{m \times k}$ then for any $\hat{\gamma}$ and $\hat{\beta}$ as in \autoref{lem:twostepprediction} $\E[\hat{\beta}(Y,W) |X{=}x] = \beta$.
        Furthermore,
        \begin{align*}
            \E[\|\hat{\beta}(Y,W) - \beta \|^2_{x' q_x q'_x x} |X{=}x]
            = m \tilde{\E}[(\tilde{Y}_0 - \tilde{W}_0' \hat{\gamma}((\tilde{Y}_i,\tilde{W}_i)_{i=1}^{n -1 - m}))^2]
        \end{align*}
        for $(\tilde{Y}_i,\tilde{W}_i)_{i=0}^{n -1 - m}$ iid with $\tilde{W}_i \sim \N(\0_k,\Sigma_W)$, $\tilde{Y}_i|\tilde{W}_i=\tilde{w}_i \sim \N(\tilde{w}'_i \gamma,\sigma^2)$ (here, $(\tilde{Y}_i,\tilde{W}_i)_{i=1}^{n -1 - m}$ is the training sample and $(\tilde{Y}_0,\tilde{W}_0)$ an additional test point drawn from the same distribution).
    \end{cor}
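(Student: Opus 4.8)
The plan is to obtain both claims directly from \autoref{lem:twostepprediction} by specializing to $\beta_W = \O_{m \times k}$ and then re-expressing the resulting estimation loss as an out-of-sample prediction error. For the unbiasedness claim I would simply substitute $\beta_W = \O_{m \times k}$ into the conditional-bias formula of the lemma, $\E[\hat{\beta}(Y,W)|X{=}x] - \beta = -\beta_W(\tilde{\E}[\hat{\gamma}(\tilde{Y},\tilde{W})] - \gamma)$. The right-hand side vanishes identically once $\beta_W = \O$, irrespective of the choice of $\hat{\gamma}$, and the moment condition $\tilde{\E}[\|\hat{\gamma}\|^2] < \infty$ guarantees that $\tilde{\E}[\hat{\gamma}]$ is well defined; hence $\E[\hat{\beta}(Y,W)|X{=}x] = \beta$.

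For the loss identity I would first plug $\beta_W = \O$ into the weight $\phi = \beta'_W x' q_x q'_x x \beta_W + m\Sigma_W$ from the lemma, which collapses to $\phi = m\Sigma_W$. The lemma's loss then reads $m\sigma^2 + \tilde{\E}[\|\hat{\gamma} - \gamma\|^2_{m\Sigma_W}] = m\big(\sigma^2 + \tilde{\E}[\|\hat{\gamma} - \gamma\|^2_{\Sigma_W}]\big)$, so after dividing by $m$ it remains to show that $\sigma^2 + \tilde{\E}[\|\hat{\gamma} - \gamma\|^2_{\Sigma_W}]$ equals the test-point prediction error $\tilde{\E}[(\tilde{Y}_0 - \tilde{W}_0'\hat{\gamma})^2]$, where $\hat{\gamma}$ is a function only of the training observations $(\tilde{Y}_i,\tilde{W}_i)_{i=1}^{n-1-m}$ and $(\tilde{Y}_0,\tilde{W}_0)$ is an independent fresh draw.

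The key step is a bias–variance-style decomposition of the prediction error on the fresh test point. Setting $U_0 = \tilde{Y}_0 - \tilde{W}_0'\gamma \sim \N(0,\sigma^2)$, I would write $\tilde{Y}_0 - \tilde{W}_0'\hat{\gamma} = U_0 + \tilde{W}_0'(\gamma - \hat{\gamma})$, square, and take expectations. Because the test point is independent of the training sample and hence of $\hat{\gamma}$, and because $U_0$ is independent of $\tilde{W}_0$ with mean zero, the cross term $2\,\tilde{\E}[U_0\,\tilde{W}_0'(\gamma - \hat{\gamma})]$ factors and vanishes. The squared-noise term contributes $\tilde{\E}[U_0^2] = \sigma^2$, and in the remaining quadratic term I would condition on the training data and integrate out $\tilde{W}_0$ first, using $\E[\tilde{W}_0\tilde{W}_0'] = \Sigma_W$, to get $\tilde{\E}[(\gamma - \hat{\gamma})'\Sigma_W(\gamma - \hat{\gamma})] = \tilde{\E}[\|\hat{\gamma} - \gamma\|^2_{\Sigma_W}]$. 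Combining the three pieces yields exactly $\sigma^2 + \tilde{\E}[\|\hat{\gamma} - \gamma\|^2_{\Sigma_W}]$, and multiplying by $m$ reproduces the stated expression.

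I do not expect a genuine obstacle here, since everything reduces to substitution into the lemma plus one elementary decomposition. The only point requiring care is the independence bookkeeping: one must use that $\hat{\gamma}$ depends solely on the $n-1-m$ training observations while $(\tilde{Y}_0,\tilde{W}_0)$ is a fresh, independent draw, so that both the cross term drops out and $\tilde{W}_0$ may be averaged separately from $\hat{\gamma}$ when forming $\E[\tilde{W}_0\tilde{W}_0'] = \Sigma_W$.
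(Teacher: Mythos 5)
Your proposal is correct and matches the paper's (implicit) argument: the paper states this corollary without proof as an immediate consequence of \autoref{lem:twostepprediction}, and your proof consists precisely of the intended steps --- substituting $\beta_W = \O_{m \times k}$ into the lemma's bias and loss formulas, and then identifying $\sigma^2 + \tilde{\E}[\|\hat{\gamma} - \gamma\|^2_{\Sigma_W}]$ with the out-of-sample prediction error via the standard decomposition $\tilde{Y}_0 - \tilde{W}_0'\hat{\gamma} = U_0 + \tilde{W}_0'(\gamma - \hat{\gamma})$, using independence of the test point from the training sample and $\E[\tilde{W}_0 \tilde{W}_0'] = \Sigma_W$. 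Your independence bookkeeping for the vanishing cross term and the separate averaging over $\tilde{W}_0$ is exactly the care the step requires, so nothing is missing.
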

    
    This corollary clarifies that the class of natural estimators derived above are unbiased conditional on $X{=}x$ (but not necessarily on $X{=}x,W{=}w$ jointly), with expected loss equal to the expected out-of-sample prediction loss in a prediction problem where the prediction function $\tilde{w}_0 \mapsto \tilde{w}'_0 \hat{\gamma}$ is trained on $n-1-m$ iid draws, and evaluated on an additional, independent draw $(\tilde{Y}_0,\tilde{W}_0)$ from the same distribution.
    The ``long'' and ``short'' regressions are included as the special cases $\hat{\gamma}(\tilde{w},\tilde{y}) = (\tilde{w}'\tilde{w})^{-1} \tilde{w}' \tilde{y}$ and $\hat{\gamma} \equiv \0_k$, respectively.
    
    The covariates in training and test sample follow the same distribution,
    which suggests an estimator that is invariant to rotations in the corresponding $k$-means problem.
    Indeed, the dominating estimator I construct in the following results is of the form
    \begin{align*}
        \hat{\mu}_w = \left( 1 - \frac{p \|Y^*_r \|^2}{\|Y^*_w \|^2}  \right) Y^*_w,
    \end{align*}
    where the standard \cite{James:1992jm} estimator (for unnknown $\sigma^2$) is recovered at $p = \frac{k - 2}{n - m - k + 1}$.

    \begin{thm}[Inadmissibility of OLS among unbiased estimators]
        \label{thm:inadmiss}
        Maintain $\beta_W = \O_{m \times k}$.
        Denote by $(\hat{\alpha}^{\OLS},\hat{\beta}^\OLS,\hat{\gamma}^\OLS)$ the coefficients and by $\SSR = \|Y - \1 \hat{\alpha}^{\OLS} - X \hat{\beta}^\OLS - W \hat{\gamma}^\OLS \|^2$ the sum of squared residuals in a linear least-squares regression of $Y$ on an $\1$, $X$, and $W$.
        Write $h = \I_n - \1_n \1_n'/n$ (the annihilator matrix with respect to the intercept).
        Assume that $k \geq 3$ and $n \geq m + k + 2$.
        Then, the two-step estimator $\hat{\beta} = (X' h X)^{-1} X' h (Y - W \hat{\gamma})$
        with
        \begin{align*}
            \hat{\gamma} = \left( 1 - \frac{p \: \SSR}{\| \hat{\gamma}^\OLS \|_{W' h (\I - X (X'hX)^{-1} X') h W}^2}  \right) \hat{\gamma}^{\OLS}
        \end{align*}
        where $p \in \left(0,\frac{2 (k - 2)}{n - m - k + 2}\right)$
        is unbiased for $\beta$ given $X{=}x$ and dominates $\hat{\beta}^\OLS$ in the sense that
        \begin{align*}
            \E[\|\hat{\beta} - \beta \|^2_{X' h X} |X{=}x]
            < \E[\|\hat{\beta}^\OLS - \beta \|^2_{X' h X} |X{=}x].
        \end{align*}
    \end{thm}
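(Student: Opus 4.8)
The plan is to handle unbiasedness and dominance separately, reducing both to the clean Normal-means problem of \autoref{lem:twostepprediction}. For unbiasedness, since $\beta_W = \O_{m\times k}$ the corollary above already gives $\E[\hat\beta(Y,W)\mid X{=}x]=\beta$ for \emph{any} measurable $\hat\gamma$ with $\tilde{\E}[\|\hat\gamma\|^2]<\infty$ in the tilde-experiment, so all that remains is to verify that the specific shrinkage rule satisfies this second-moment bound. This is the standard integrability check for a James--Stein rule, and it is where the hypotheses $k\geq 3$ and $n\geq m+k+2$ first enter.

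For dominance I would first rewrite the estimator in canonical coordinates. Using $h-hX(X'hX)^{-1}X'h=q_wq_w'+q_rq_r'$ together with $q_r'w=\0$, the weighting matrix collapses to $M=w'q_wq_w'w$, so $\|\hat\gamma^{\OLS}\|_M^2=\|Y^*_w\|^2$; likewise $\SSR=\|Y^*_r\|^2$. Hence the theorem's $\hat\gamma$ is exactly the canonical rule $\hat\mu_w=(1-p\|Y^*_r\|^2/\|Y^*_w\|^2)Y^*_w$ and $\hat\beta$ is the corresponding two-step estimator of the lemma. By \autoref{lem:twostepprediction} with $\beta_W=\O_{m\times k}$, the conditional risk is $m\sigma^2+m\,\tilde{\E}[\|\hat\gamma-\gamma\|^2_{\Sigma_W}]$, so it suffices to beat OLS in the $\Sigma_W$-weighted loss. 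Whitening by $\eta=\Sigma_W^{1/2}\gamma$, $V=\tilde W\Sigma_W^{-1/2}$ turns this into ordinary squared-error loss $\|\hat\eta-\eta\|^2$ for the regression of $\tilde Y$ on the isotropic design $V$ (rows iid $\N(\0_k,\I_k)$), and the polar decomposition $V=P_w(V'V)^{1/2}$ presents the observable as a spherical mean $Z_w=P_w'\tilde Y\sim\N(\nu,\sigma^2\I_k)$, $\nu=(V'V)^{1/2}\eta$, with scalar shrinkage $\hat\nu=(1-pS/\|Z_w\|^2)Z_w$, an independent scale estimate $S=\SSR\sim\sigma^2\chi^2_{n-1-m-k}$, and a \emph{random} loss weight $\Omega^{-1}=(V'V)^{-1}$.

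Conditional on the design $V$, I would then compute the risk difference by Stein's identity in $Z_w$ (spherical covariance) together with the first two moments of $S$. This yields a Stein's-unbiased-risk expression in which dominance holds exactly when
\begin{align*}
  2p\,\tilde{\E}\!\left[\tfrac{\textnormal{tr}(\Omega^{-1})}{\|Z_w\|^2}\right]
  \;-\;\bigl(4p+p^2(n{-}m{-}k{+}1)\bigr)\,\tilde{\E}\!\left[\tfrac{Z_w'\Omega^{-1}Z_w}{\|Z_w\|^4}\right]\;>\;0 .
\end{align*}
Conditionally this is a Bock-type known-covariance shrinkage problem, and in the degenerate isotropic case $\Omega=\textnormal{const}\cdot\I_k$ one has $\textnormal{tr}(\Omega^{-1})/\|Z_w\|^2=k\,Z_w'\Omega^{-1}Z_w/\|Z_w\|^4$, which recovers the classical threshold $p<2(k-2)/(n-m-k+1)$.

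The main obstacle is the passage from this conditional identity to the unconditional statement: the pointwise bound $Z_w'\Omega^{-1}Z_w\leq\textnormal{tr}(\Omega^{-1})\|Z_w\|^2$ runs the wrong way, and the eigenvalue spread of $\Omega=V'V$ prevents conditional dominance for every design. The resolution is to average over the Wishart law of $\Omega$ while exploiting that $Z_w$ and $\Omega$ are coupled only through the mean $\nu=(V'V)^{1/2}\eta$: because $\|Z_w\|^2$ tends to grow with $\Omega$, this coupling simultaneously supplies the integrability that lets the argument run at the boundary $k\geq 3$, $n\geq m+k+2$ (where $\E[(V'V)^{-1}]$ alone would diverge) and tightens the admissible range to $p<2(k-2)/(n-m-k+2)$. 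Concretely I would try to convert the $\textnormal{tr}(\Omega^{-1})$ term into the $Z_w'\Omega^{-1}Z_w$ term by a second integration by parts in the Gaussian design $V$ — legitimate since $(\tilde Y,\tilde W)$ is jointly Gaussian — or else evaluate the two Wishart-type expectations directly; this bookkeeping, together with verifying the boundary integrability and the exact degrees-of-freedom count, is the delicate part of the argument.
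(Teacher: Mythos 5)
Your reduction is correct and matches the paper's: the canonical-coordinate algebra ($q_\perp q_\perp' = q_w q_w' + q_r q_r'$ with $q_r'w = \0$, hence $\|\hat\gamma^{\OLS}\|^2_{W'h(\I - X(X'hX)^{-1}X')hW} = \|Y^*_w\|^2$ and $\SSR = \|Y^*_r\|^2$) is exactly how the theorem's estimator is identified with the two-step rule of \autoref{lem:twostepprediction}, and unbiasedness together with the risk decomposition $m\sigma^2 + m\,\tilde{\E}[\|\hat\gamma - \gamma\|^2_{\Sigma_W}]$ follows from that lemma with $\beta_W = \O_{m\times k}$, just as in the paper. Your conditional Stein's-identity computation is also right: with $S \sim \sigma^2\chi^2_d$, $d = n-m-k-1$, independent of $Z_w$, the risk difference has exactly the coefficients $2p$ and $4p + p^2(d+2) = 4p + p^2(n-m-k+1)$ you display, and the isotropic specialization recovers $p < 2(k-2)/(n-m-k+1)$.

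The gap is that everything after this point --- the step you yourself flag as ``the delicate part'' --- is the entire mathematical content of the theorem, and your proposal does not supply it. Dominance conditional on the design genuinely fails (as you note, the Bock-type eigenvalue conditions can be violated), so the result lives or dies on the unconditional average over the Wishart law of $\Omega = V'V$, coupled to $Z_w$ through the mean $\nu = (V'V)^{1/2}\eta$. That unconditional dominance statement, with the threshold $2(k-2)/(n-m-k+2)$, is precisely Theorem~1 of \cite{Baranchik:1973eb}: the paper does not re-derive it but invokes it, and spends its effort on a bridging construction you would also need in some form --- Baranchik's theorem is stated for a regression with an estimated intercept, so the paper builds an augmented problem with one extra observation, projects out the intercept via $q^a(q^a)' = h^a$, and checks that the projected data have the same distribution as $(\tilde{Y},\tilde{W})$, so that the dominance transfers and the degrees-of-freedom count $n - m \geq k+2$, $p < 2(k-2)/(n-m-k+2)$ comes out right. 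Your two proposed strategies (a second integration by parts in the Gaussian design, or direct evaluation of the two Wishart-type expectations) are sketches of how one might reprove Baranchik's theorem, not a proof; neither is carried far enough to see whether it closes, and the known proofs of this result require genuinely more than bookkeeping (Baranchik's argument proceeds through noncentral-distribution representations rather than a second Stein identity in the design). A minor additional remark: the integrability condition $\tilde{\E}[\|\hat\gamma\|^2] < \infty$ that you propose to verify separately for unbiasedness is most easily obtained as a byproduct of the dominance inequality itself (finite risk under a positive-definite weight), so it does not constitute an independent route into the hypotheses $k \geq 3$, $n \geq m+k+2$; those hypotheses are consumed by the Baranchik step.
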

    
    \begin{proof}
        The OLS estimator in the theorem corresponds to $\hat{\gamma}^{\OLS}(\tilde{y},\tilde{w}) = (\tilde{w}'\tilde{w})^{-1} \tilde{y}'\tilde{w}$ in \autoref{lem:twostepprediction},
        which yields the maximum-likelihood estimator $\hat{\gamma}^{\OLS}(\tilde{Y},\tilde{W})$ for $\gamma$ given data
        \begin{align*}
            \vec(\tilde{W}) &\sim \N(\0_{k (n - 1 - m)},\Sigma_W \otimes \I_{n-1-m}),
            \\
            \tilde{Y} | \tilde{W}=\tilde{w} &\sim
            \N(\tilde{w} \gamma, \sigma^2 \I_{n - 1 - m}).
        \end{align*}
        By \cite{Baranchik:1973eb}, this maximum-likelihood estimator is inadmissible with respect to the risk
        $
            \tilde{\E}[\|\hat{\gamma} - \gamma \|^2_{\Sigma_W}]
        $
        and thus for $\tilde{\E}[\|\hat{\gamma} - \gamma \|^2_{\phi}$ in \autoref{lem:twostepprediction}, as $\phi = m \Sigma_W$ for $\beta_W = \O_{m \times k}$.
        However, \cite{Baranchik:1973eb} also includes an intercept that is estimated, but does not enter the loss function.
        To formally use the result for our case without intercept in the first-step prediction exercise,
        I construct an augmented problem such that the dominance result in the augmented problem implies the theorem.
        
        To this end,
        let 
        \begin{align*}
            \vec(W^a) &\sim \N(\0_{k (n  - m)},\Sigma_W \otimes \I_{n-m}),
            \\
            Y^a | W^a=w^a &\sim
            \N(w^a \gamma, \sigma^2 \I_{n  - m}).
        \end{align*}
        (which has one additional sample point, and could without loss include intercepts in $W^a$, $Y^a$).
        By \citet[Theorem~1]{Baranchik:1973eb},
        the estimator
        \begin{align*}
            \hat{\gamma}^a = \left(1 - p \frac{(Y^a)' h^a Y^a - \| \hat{\gamma}^{a,\OLS} \|^2_{(W^a)' h^a W^a}}{\| \hat{\gamma}^{a,\OLS} \|^2_{(W^a)' h^a W^a}} \right) \hat{\gamma}^{a,\OLS}
        \end{align*}
        strictly dominates $\hat{\gamma}^{a,\OLS} = ((W^a)' h^a W^a)^{-1} (W^a)' h^a Z^a$,
        where $h^a = \I_{n - m} - \1_{n - m} \1'_{n - m} / (n - m)$,
        in the sense that
        \begin{align*}
            \E^a[(\hat{\gamma}^a - \gamma)' \Sigma_W (\hat{\gamma}^a - \gamma)]
            < \E^a[(\hat{\gamma}^{a,\OLS} - \gamma)' \Sigma_W (\hat{\gamma}^{a,\OLS} - \gamma)]
        \end{align*}
        for any $\gamma \in \R^k$,
        provided that $p \in \left( 0, \frac{2 (k - 2)}{n - m - k + 2}\right)$ with $k \geq 3$ and $n - m \geq k + 2$.
        
        We now show that this implies dominance of $\hat{\gamma}(\tilde{Y},\tilde{W})$ for
        \begin{align*}
            \hat{\gamma}(\tilde{y},\tilde{w}) = \left(1 - p \frac{\tilde{y}'\tilde{y} - \| \hat{\gamma}^{\OLS}(\tilde{y},\tilde{w}) \|^2_{\tilde{w}'\tilde{w}}}{\| \hat{\gamma}^{\OLS}(\tilde{y},\tilde{w}) \|^2_{\tilde{w}'\tilde{w}}} \right) \hat{\gamma}^{\OLS}(\tilde{y},\tilde{w})
        \end{align*}
        in the original problem.
        Let $q^a \in \R^{(n - m) \times (n - m - 1)}$ be such that $(q^a,\1_{n - m}/(n-m))$ is orthonormal (that is, the columns of $q^a$ complete $\1_{n - m}/(n-m)$ to an orthonormal basis of $\R^{m - n}$).
        This implies that $q^a (q^a)' = h^a$ and $(q^a)' q^a = \I_{n - m - 1}$.
        Then, $(q^a)' (Y^a,W^a) \stackrel{d}{=} (\tilde{Y},\tilde{W})$.
        In particular,
        \begin{align*}
            ((Y^a)'h^a (Y^a), (Y^a)'h^a (W^a), (W^a)'h^a (W^a)) \stackrel{d}{=} (\tilde{Y}'\tilde{Y},\tilde{Y}'\tilde{W},\tilde{W}'\tilde{W})
        \end{align*}
        and thus $(\hat{\gamma}^a,\hat{\gamma}^{a,\OLS}) \stackrel{d}{=} (\hat{\gamma}(\tilde{Y},\tilde{W}),\hat{\gamma}^{\OLS}(\tilde{Y},\tilde{W}))$.
        We have thus established
        \begin{align*}
            &\tilde{\E}[(\hat{\gamma}(\tilde{Y},\tilde{W}) - \gamma)' \Sigma_W (\hat{\gamma}(\tilde{Y},\tilde{W}) - \gamma)] \\
            &< \tilde{\E}[(\hat{\gamma}^{\OLS}(\tilde{Y},\tilde{W}) - \gamma)' \Sigma_W (\hat{\gamma}^{\OLS}(\tilde{Y},\tilde{W}) - \gamma)].
        \end{align*}
        Note that
        $\hat{\gamma}^{\OLS}(\tilde{y},\tilde{w}) = (\tilde{w}'\tilde{w})^{-1} \tilde{y}'\tilde{w}$ in \autoref{lem:twostepprediction} does indeed yield $\hat{\gamma}^{\OLS}$ and $\hat{\beta}^{\OLS}$ in the theorem,
        and that this extends to $\hat{\gamma}$ and $\hat{\beta}$ by
        \begin{align*}
            \hat{\gamma}(q'_\perp y,q'_\perp w)
            = \left( 1 - p \frac{ \| y \|^2_{q_{\perp} q'_{\perp}} - \|  \hat{\gamma}^{\OLS}(\ldots) \|^2_{w ' q_{\perp} q'_{\perp} w}}{\|  \hat{\gamma}^{\OLS}(\ldots) \|^2_{w ' q_{\perp} q'_{\perp} w}} \right)\hat{\gamma}^{\OLS}(\ldots)
        \end{align*}
        with
        $q_{\perp} q'_{\perp} = h (\I - x (x'hx)^{-1} x') h$
        and
        \begin{align*}
            \SSR &= \|Y - \1 \hat{\alpha}^{\OLS} - X \hat{\beta}^\OLS - W \hat{\gamma}^\OLS \|^2
            \\
            &= \|Y - W \hat{\gamma}^\OLS \|_{h (\I - X (X'hX)^{-1} X') h}^2 \\
            &= \|Y \|_{h (\I - X (X'hX)^{-1} X') h}^2 - \| W \hat{\gamma}^\OLS \|_{h (\I - X (X'hX)^{-1} X') h}^2 \\
            &= \|Y \|_{h (\I - X (X'hX)^{-1} X') h}^2 - \| \hat{\gamma}^\OLS \|_{W 'h (\I - X (X'hX)^{-1} X') h W}^2.
        \end{align*}
        Unbiasedness and dominance follow with $\beta_W = \O_{m \times k}$ in \autoref{lem:twostepprediction}.
    \end{proof}
    
    Note that the result extends to the positive-part analog for which the shrinkage factor is set to zero whenever the expression is negative.
    For $m=1$, the following dominance is immediate:
    
    \begin{cor}[A non-contradiction of Gauss--Markov]
        For exogenous treatment, $m=1$, $k \geq 3$, and $n \geq k + 3$,
        there exists an estimator $\hat{\beta}$ with $\E[\hat{\beta}|X{=}x] = \beta$
        and $\Var(\hat{\beta}|X{=}x) < \Var(\hat{\beta}^\OLS|X{=}x)$.
    \end{cor}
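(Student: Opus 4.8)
The plan is to read off the corollary directly from \autoref{thm:inadmiss} specialized to $m = 1$, since the only gap between the two statements is the passage from a weighted mean-squared-error inequality to a bare variance inequality. First I would invoke the theorem with $m = 1$: the hypotheses $k \geq 3$ and $n \geq k + 3 = m + k + 2$ are exactly those it requires, so it furnishes a two-step estimator $\hat{\beta}$ that is unbiased for $\beta$ given $X{=}x$ and satisfies
\begin{align*}
    \E[\|\hat{\beta} - \beta\|^2_{X'hX}|X{=}x]
    < \E[\|\hat{\beta}^\OLS - \beta\|^2_{X'hX}|X{=}x].
\end{align*}

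Next I would use that for $m = 1$ the regressor of interest is a single column, so $X'hX$ is a strictly positive scalar (namely $\|hX\|^2 > 0$ under the maintained full-rank assumption, using $h = h' = h^2$) and $\beta$ is scalar. The weighted loss then factors as $\|\hat{\beta} - \beta\|^2_{X'hX} = (X'hX)(\hat{\beta} - \beta)^2$, so the constant $X'hX$ pulls outside the conditional expectation on both sides of the displayed inequality and cancels, reducing the dominance relation to
\begin{align*}
    \E[(\hat{\beta} - \beta)^2|X{=}x] < \E[(\hat{\beta}^\OLS - \beta)^2|X{=}x].
\end{align*}

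Finally, since both $\hat{\beta}$ and the OLS estimator are unbiased conditional on $X{=}x$ --- the latter being the special case $\hat{\gamma} = \hat{\gamma}^\OLS$ of the unbiased class, and both covered by \autoref{lem:twostepprediction} with $\beta_W = \O_{m\times k}$ --- each conditional mean-squared error equals the corresponding conditional variance. The last display is therefore exactly $\Var(\hat{\beta}|X{=}x) < \Var(\hat{\beta}^\OLS|X{=}x)$, which is the claim.

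There is essentially no technical obstacle here: the entire content is the bookkeeping that converts the scalar weighted risk into a variance, and unbiasedness is inherited verbatim from the theorem. The only point worth flagging is conceptual rather than computational, and is what the title of the corollary emphasizes --- this does \emph{not} contradict the Gauss--Markov theorem, because the dominating $\hat{\beta}$ is a nonlinear (James--Stein-type shrinkage) function of $Y$ and so lies outside the class of linear unbiased estimators over which OLS is optimal.
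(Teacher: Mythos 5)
Your proposal is correct and matches the paper's intent exactly: the paper presents this corollary as ``immediate'' from \autoref{thm:inadmiss}, and your argument---specializing to $m=1$ so that $X'hX$ is a positive scalar that cancels from both sides, then using conditional unbiasedness of both estimators to identify mean-squared error with variance---is precisely the bookkeeping the paper leaves implicit. Your closing remark about why this does not contradict Gauss--Markov (the dominating estimator is nonlinear in $Y$) is also the point the corollary's title is making.
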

        
    The assumption of exogenous treatment is essential for this result, as dropping conditioning on $W$ and restricting interest to $\beta$ would not suffice to break optimality of linear least-squares.

    \section{Invariance Properties and Bayesian Interpretation}
    \label{sect:invariance}
    
    Starting with the transformations in \autoref{sect:uncond}, we consider the decision problem of estimating $\beta$ (equivalently, $\mu_x$).
    Guided by the treatment of a linear panel-data model in \cite{Chamberlain:2009gx},
    I develop the specific estimator proposed in \autoref{thm:inadmiss} as (the empirical Bayes version of) an invariant Bayes estimator with respect to a partially uninformative (improper) Jeffreys prior.

    \subsection{Decision problem set-up}
    
    In this section, we condition on $X$ throughout and assume that covariates $W$ are Normally distributed given $X$.
    Writing $W^*_x = q'_x W, W^*_\perp = q'_\perp W,Y^*_x = q'_x Y, Y^*_\perp = q'_\perp Y$,
    the transformation developed in \autoref{sect:uncond} yields the joint distribution
    \begin{align}
    \label{eqn:canonicaleasy}
    \begin{aligned}
        \begin{pmatrix}
            W^*_x \\ W^*_\perp
        \end{pmatrix}
        &=
        \begin{pmatrix}
            \mu_W \\ \O_{s \times k}
        \end{pmatrix}
        +
        V_W
        \Sigma^{1/2}_W
        &
        (V_W)_{ij} &\stackrel{\text{iid}}{\sim} \N(0,1)
        \\
        \begin{pmatrix}
            Y^*_x \\ Y^*_\perp
        \end{pmatrix}
        &=
        \begin{pmatrix}
            \mu_x + W^*_x \gamma \\ W^*_\perp \gamma 
        \end{pmatrix}
        +
        V_Y \sigma^2
        &
        (V_Y)_{i} &\stackrel{\text{iid}}{\sim} \N(0,1)
    \end{aligned}
    \end{align}
    where $\Sigma_W^{1/2}$ is the unique symmetric positive-definite square-root of the symmetric positive-definite matrix $\Sigma_W$, and $V_W$ and $V_Y$ are independent.
    Here, in addition to $\mu_x = q'_x x \beta$, also  $\mu_W = q'_x x \beta_W$, and $s = n - m - 1$.
    I write $\mathcal{Z} = \R^{m + s} \times \R^{(m + s) \times k}$ for the sample space from which $(Y^*,W^*)$ is drawn according to this $\P_\theta$,
    where I parametrize $\theta = (\mu_x,\gamma) \in \Theta = \R^m \times \R^k$.
    (I take $\sigma^2,\Sigma_W,\mu_W$ to be constants.)
    
    The action space is $\mathcal{A}= \R^m$, from which an estimate of $\mu_x$ is chosen.
    As the loss function $L: \Theta \times \mathcal{A} \rightarrow \R$ I take squared-error loss $L(\theta,a) = \| \mu_x - a \|^2$.
    An estimator $\hat{\beta}: \mathcal{Z} \rightarrow \mathcal{A}$ from the previous section is a feasible decision rule in this decision problem.
    
    \subsection{A set of transformations}
        
        For an element $g = (g_\mu,g_x,g_W,g_\perp)$ in the (product) group $G = \R^m \times O(m) \times O(k) \times O(s)$, where $\R^m$ denotes the group of real numbers with addition (neutral element $0$) and $O(k)$ the group of ortho-normal matrices in $\R^{k \times k}$ with matrix multiplication (neutral element $\I_k$),
        consider the following set of transformations (which are actions of $G$ on $\mathcal{Z}, \Theta, \mathcal{A}$):
        \begin{itemize}
            \item Sample space: $m_\mathcal{Z}: G \times \mathcal{Z} \rightarrow \mathcal{Z}$,
                            \begin{align*}
                                &(g,(y_x,y_\perp,w_x,w_\perp)) \\
                                &\mapsto
                                (
                                    g_x y_x + g_\mu, g_\perp y_\perp,
                                    g_x w_x \Sigma_W^{-1/2} g'_W \Sigma_W^{1/2}, g_\perp w_\perp \Sigma_W^{-1/2} g'_W \Sigma_W^{1/2}
                                )
                            \end{align*}
            \item Parameter space: $m_\Theta: G \times \Theta \rightarrow \Theta$,
                    \begin{align*}
                        (g,(\mu_x,\gamma)) \mapsto
                        (g_x \mu_x + g_\mu,\Sigma_W^{-1/2} g_W \Sigma_W^{1/2} \gamma)
                    \end{align*}
            \item Action space: $m_\mathcal{A}: G \times \mathcal{A} \rightarrow \mathcal{A}, (g,a) \mapsto g_x a + g_\mu$
        \end{itemize}
        For exogenous treatment, these transformations are tied together by leaving model and loss invariant. Indeed, the following is immediate from \autoref{eqn:canonicaleasy}:%
        \footnote{Alternatively, we could have treated $\mu_W$ as an element of the parameter space and extend the analysis to the case of endogenous treatment. Adding $(g,\mu_W) \mapsto g_x \mu_W \Sigma_W^{-1/2} g'_W \Sigma_W^{1/2}$ to the action on the parameter space would have retained invariance.}
        
        \begin{prop}[Invariance of model and loss]
        \label{prop:invariance}
        For $\mu_W = \O_{m \times k}$:
            \begin{enumerate}
                \item The model is invariant: $m_\mathcal{Z}(g,(Y^*,W^*)) \sim \P_{m_\Theta(g,\theta)}$ for all $g \in G$.
                \item The loss is invariant: $L(m_\Theta(g,\theta),m_\mathcal{A}(g,a)) = L(\theta,a)$ for all $g \in G$.
            \end{enumerate}
        \end{prop}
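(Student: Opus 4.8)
The plan is to treat the two claims separately, disposing of the loss invariance by direct calculation and reserving the effort for the model invariance, which is the only part with genuine content.

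For claim (2) I would simply substitute the definitions: $L(m_\Theta(g,\theta),m_\mathcal{A}(g,a)) = \|(g_x \mu_x + g_\mu) - (g_x a + g_\mu)\|^2 = \|g_x(\mu_x - a)\|^2 = \|\mu_x - a\|^2 = L(\theta,a)$, where the translation $g_\mu$ cancels and the orthogonal matrix $g_x \in O(m)$ preserves the Euclidean norm. This holds for every $g$ and uses nothing beyond orthogonality of $g_x$; in particular it does not require $\mu_W = \O$.

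For claim (1), rather than manipulating densities I would exhibit noise matrices that realize the transformed data under the claimed parameter $m_\Theta(g,\theta) = (g_x \mu_x + g_\mu,\,\Sigma_W^{-1/2} g_W \Sigma_W^{1/2} \gamma)$. Starting from a draw $(Y^*,W^*) \sim \P_\theta$ written in the form \eqref{eqn:canonicaleasy} with independent standard-Gaussian matrices $V_W, V_Y$, define $\tilde V_W$ and $\tilde V_Y$ as the images of $V_W$ and $V_Y$ under left-multiplication by the block-diagonal orthogonal matrix with blocks $g_x$ and $g_\perp$, together with, for $V_W$, right-multiplication by $g_W'$. The distributional step is then routine: left- and right-multiplication by orthogonal matrices preserves the law of a matrix with iid $\N(0,1)$ entries (the associated Kronecker factor $g_W \otimes \mathrm{diag}(g_x,g_\perp)$ is orthogonal), so $\tilde V_W$ and $\tilde V_Y$ again have iid standard-normal entries, and since each is a deterministic function of one of the independent matrices $V_W,V_Y$, they remain independent.

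The crux is the algebraic step: substituting $\tilde V_W, \tilde V_Y$ and $m_\Theta(g,\theta)$ into \eqref{eqn:canonicaleasy} must reproduce $m_\mathcal{Z}(g,(Y^*,W^*))$ term by term. The one nontrivial identity is that the twist applied to the covariates on the sample space is exactly undone by the inverse twist applied to $\gamma$ in $m_\Theta$, namely $\tilde W^*_x \, \tilde\gamma = g_x W^*_x \Sigma_W^{-1/2} g_W' \Sigma_W^{1/2} \cdot \Sigma_W^{-1/2} g_W \Sigma_W^{1/2} \gamma = g_x W^*_x \gamma$, where the middle factors telescope through $\Sigma_W^{1/2}\Sigma_W^{-1/2} = \I_k$ and $g_W' g_W = \I_k$ (and identically on the $\perp$-block). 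The $Y$-mean then becomes $g_x\mu_x + g_\mu + g_x W^*_x \gamma$ on the $x$-block and $g_\perp W^*_\perp \gamma$ on the $\perp$-block, matching the transformed data exactly. This is precisely where the hypothesis $\mu_W = \O$ enters, and I expect it to be the main thing to flag: the $W$-marginal carries mean $(\mu_W,\O)'$, and the sample-space action sends $\mu_W \mapsto g_x \mu_W \Sigma_W^{-1/2} g_W' \Sigma_W^{1/2}$, which equals the held-fixed constant $\mu_W$ for all $g$ only when $\mu_W = \O$; otherwise the $W$-marginal is not preserved and model invariance fails (as the footnote anticipates). With $\mu_W = \O$ the $W$-marginal is $\tilde V_W \Sigma_W^{1/2}$, free of $\theta$, so the full joint law of $m_\mathcal{Z}(g,(Y^*,W^*))$ coincides with $\P_{m_\Theta(g,\theta)}$, completing the argument.
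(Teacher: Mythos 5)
Your proof is correct and takes the route the paper itself has in mind: the paper declares the proposition ``immediate from'' the canonical form \eqref{eqn:canonicaleasy}, and your argument---realizing the transformed data via the orthogonally transformed noise matrices $\tilde V_W = \mathrm{diag}(g_x,g_\perp)\,V_W\,g_W'$ and $\tilde V_Y = \mathrm{diag}(g_x,g_\perp)\,V_Y$, telescoping the $\Sigma_W^{\pm 1/2}$ and $g_W$ factors in $\tilde W^* \tilde\gamma$, and the one-line norm computation for the loss---is exactly the verification the paper leaves to the reader. Your identification of where $\mu_W = \O_{m\times k}$ enters (only in the model invariance, via the $W$-marginal mean) is also the right diagnosis and matches the paper's footnote about extending the action to $\mu_W$.
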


    \subsection{An invariant Bayes estimator \ldots}
    
    By Proposition~\ref{prop:invariance}, a natural (generalized) Bayes estimator of $\mu_x$ is derived from an improper prior on $\theta$ that is invariant under the action of $G$ on $\Theta$,
    as this will yield a decision rule $d: \mathcal{Z} \rightarrow \mathcal{A}$ that is invariant in the sense that $d(m_\mathcal{Z}(g,(y,w))) = m_\mathcal{A}(g,d((y,w)))$ for all $(g,(y,w)) \in G \times \mathcal{Z}$.
    This implies for $\mu_x$ as an improper prior the Haar measure with respect to the translation action (i.e. up to a multiplicative constant the $\sigma$-finite Lebesgue measure on $\mathbb{R}^m$),
    and for $\gamma$ a prior that is uniform on ellipsoids $\gamma' \Sigma_W \gamma = \omega$.
    Taking $\frac{\omega}{\tau^2} \sim \chi^2_m$ with some $\tau > 0$ yields the prior $\gamma \sim \N(\0,\tau^2 \Sigma_W^{-1})$.
    With a product prior for $\theta$, the resulting generalized Bayes estimator for $\mu_x$ -- which minimizes posterior loss conditional on the data -- is
    \begin{align*}
        \E[\mu_x | Y^*=y, W^*=w]
        &= y_x - w_x \E[\gamma | Y^*=y, W^*=w] \\
        &= y_x - w_x (w'_\perp w_\perp + \sigma^2 \Sigma_W / \tau^2)^{-1} w'_\perp y_\perp.
    \end{align*}
    
    \subsection{\ldots and a specific empirical Bayes implementation}
    
    Replacing $\Sigma_W$ by the specific sample analog $W_\perp' W_\perp / s$,
    we obtain
    the estimator $Y^*_x - \frac{s \tau^2}{s \tau^2 + \sigma^2} W^*_x ((W^*_\perp)' W^*_\perp)^{-1} (W^*_\perp)' Y^*_\perp$.
    Similarly assuming that $\gamma \sim \N(\0,s \tau^2 W_\perp' W_\perp)$,
    an unbiased estimator of $\frac{s \tau^2}{s \tau^2 + \sigma^2}$ (given $W$)
    is
    \begin{align*}
        C = 1 - \frac{(Y^*_\perp)' (\I_s - W^*_\perp ((W^*_\perp)' W^*_\perp)^{-1} (W^*_\perp)') Y^*_\perp / (s - k)}{(Y^*_\perp)' W^*_\perp ((W^*_\perp)' W^*_\perp)^{-1} (W^*_\perp)' Y^*_\perp / (k - 2)}.
    \end{align*}
    This estimator corresponds to the estimator from \autoref{thm:inadmiss} at $p = \frac{k - 2}{s - k} = \frac{k - 2}{n - m - k - 1}$.
    By construction, it retains the invariance of the associated generalized Bayes estimator. 
    This is not specific to this value of $p$:
    
        \begin{prop}[Invariance of estimator]
        \label{prop:invarianceest}
            For any $p$, the estimator $\hat{\beta}$ from \autoref{thm:inadmiss} is invariant with respect to the above actions of $G$.
        \end{prop}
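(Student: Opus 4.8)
The plan is to work entirely in the canonical coordinates of \Cref{eqn:canonicaleasy}, where the estimator from \autoref{thm:inadmiss}, viewed as the decision rule $d$ estimating $\mu_x = q'_x x \beta$, reads $d(y,w) = y_x - w_x \hat{\gamma}(y_\perp,w_\perp)$ with $\hat{\gamma}$ the scalar-shrinkage estimator built on $\hat{\gamma}^\OLS(y_\perp,w_\perp) = (w'_\perp w_\perp)^{-1} w'_\perp y_\perp$. The goal is to verify the defining equivariance $d(m_\mathcal{Z}(g,(y,w))) = m_\mathcal{A}(g,d(y,w)) = g_x\, d(y,w) + g_\mu$ by direct computation. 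Throughout I abbreviate $M = \Sigma_W^{-1/2} g'_W \Sigma_W^{1/2}$, the matrix that multiplies $w_x$ and $w_\perp$ from the right in the sample-space action $m_\mathcal{Z}$; since $g_W \in O(k)$, $M$ is invertible with $M^{-1} = \Sigma_W^{-1/2} g_W \Sigma_W^{1/2}$, which is precisely the action $m_\Theta$ places on the nuisance coordinate $\gamma$.

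First I would establish equivariance of the least-squares first stage. Writing the transformed perpendicular blocks as $\bar{w}_\perp = g_\perp w_\perp M$ and $\bar{y}_\perp = g_\perp y_\perp$, the orthogonality $g'_\perp g_\perp = \I_s$ makes $g_\perp$ cancel in $\bar{w}'_\perp \bar{w}_\perp = M'(w'_\perp w_\perp)M$ and $\bar{w}'_\perp \bar{y}_\perp = M' w'_\perp y_\perp$, so that $\hat{\gamma}^\OLS(\bar{y}_\perp,\bar{w}_\perp) = M^{-1}\hat{\gamma}^\OLS(y_\perp,w_\perp)$. Next I would check that the scalar shrinkage factor is invariant. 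Its two nontrivial ingredients are $y'_\perp y_\perp$, which is invariant because $g'_\perp g_\perp = \I_s$, and $\|\hat{\gamma}^\OLS\|^2_{w'_\perp w_\perp}$, for which the $M$'s cancel: $\|M^{-1}\hat{\gamma}^\OLS\|^2_{M'(w'_\perp w_\perp)M} = (\hat{\gamma}^\OLS)'(w'_\perp w_\perp)\hat{\gamma}^\OLS$ since $M M^{-1} = \I_k$. Hence the full scalar factor is unchanged, and combining with the previous step yields $\hat{\gamma}(\bar{y}_\perp,\bar{w}_\perp) = M^{-1}\hat{\gamma}(y_\perp,w_\perp)$; the positive-part variant is covered automatically, as only the (invariant) scalar is truncated.

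Finally I would assemble the target block. Because $\bar{w}_x = g_x w_x M$ while $\hat{\gamma}$ picks up the factor $M^{-1}$, the product telescopes to $\bar{w}_x \hat{\gamma}(\bar{y}_\perp,\bar{w}_\perp) = g_x w_x \hat{\gamma}(y_\perp,w_\perp)$, and with $\bar{y}_x = g_x y_x + g_\mu$ this gives $d(m_\mathcal{Z}(g,(y,w))) = g_x(y_x - w_x\hat{\gamma}) + g_\mu = g_x\, d(y,w) + g_\mu = m_\mathcal{A}(g,d(y,w))$, as required. I expect the only genuine care to be in the bookkeeping: confirming that the $\Sigma_W^{\pm 1/2}$ and $g_W$ factors packaged in $M$ cancel in exactly the right places — once inside the weighted norm defining the shrinkage factor, and once in the product $w_x\hat{\gamma}$ — so that the same $M$ that rotates the least-squares stage is undone by the $M$ that multiplies $w_x$. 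Everything else reduces to the orthogonality of $g_\perp$ and the explicit definition of the actions, with no dependence on the value of $p$.
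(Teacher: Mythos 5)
Your proposal is correct: the paper states this proposition without an explicit proof (treating it as immediate ``by construction'' from the empirical-Bayes derivation), and your direct computation supplies exactly the verification being invoked — equivariance of the first-stage least-squares coefficient ($\hat{\gamma}^{\OLS} \mapsto M^{-1}\hat{\gamma}^{\OLS}$ with $M = \Sigma_W^{-1/2} g'_W \Sigma_W^{1/2}$), invariance of the scalar shrinkage factor via $g'_\perp g_\perp = \I_s$ and the cancellation $M M^{-1} = \I_k$ inside the weighted norm, and the telescoping in $w_x \hat{\gamma}$, none of which depends on $p$. As a minor remark, your argument also correctly makes no use of the exogeneity condition $\mu_W = \O_{m \times k}$ (needed only for \autoref{prop:invariance}, not here), since invariance of the decision rule is a pointwise algebraic identity on the sample space.
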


    \section*{Conclusion}

    A natural application of James--Stein shrinkage to control variables in a Normal linear model consistently reduces expected prediction error without introducing bias in the treatment parameter of interest provided treatment is random.
    In this case, the linear least-squares estimator is thus inadmissible even among unbiased estimators.
    
    In a companion paper \citep{JSIV}, I show how shrinkage in at least four instrumental variables in a canonical structural form provides consistent bias improvement over the two-stage least-squares estimator.
    Together, these results suggests different roles of overfitting in control and instrumental variable coefficients, respectively: while overfitting to control variables induces variance, overfitting to instrumental variables in the first stage of a two-stage least-squares procedure induces bias.
    
    \bibliography{Bibliography}	
    
\end{document}